\newtheorem{theorem}{Theorem}
\newtheorem{lemma}{Lemma}
\newtheorem{corollary}{Corollary}
\title{Fermat's Last Theorem: Algebra and Number Theory}
\author{Felix Sidokhine}
\date{}                                           % Activate to display a given date or no date
\begin{document}
\maketitle

\begin{abstract}
In our work we give the examples using Fermat's Last Theorem for solving some problems from algebra and number theory.
\end{abstract}

\section{Introduction}

The proof of Fermat's last theorem is viewed as one of the crown accomplishments in mathematics. However, after the orchestra faded the community was left with questions that most of us try to avoid - how we can apply Fermat's theorem to obtain new proofs to some of the known theorems in algebra, number theory and geometry as well as derive some new results. Such a work carries value, even though one could argue that it is educational rather than scientific, as we known that it is possible to prove theorems by using ``hard'' ways and ``easy'' ways. But if Fermat's Last Theorem is true, then why should ignore its possible applications to problems of algebra, number theory and geometry?

\section{Fermat's Theorem \&  Algebra}

One of the possible generalizations of Fermat's last theorem is the Euler-Ekel hypothesis. In this section we will discuss the connection between these two objects by looking at them through the prism of polynomials and splitting fields. In this section we are working with integral polynomials over the field of rational numbers $\mathbb{Q}$.

Let us first take a look at a very simple, yet elegant theorem:

\begin{theorem}\label{thm00}
Let $p(x)=x^3+bx+a^n$ where $a,b \text{ } (a > 0, b \neq 0)$ are co-prime.

If $\mathbb{Q}$ is a splitting field for $p(x)$, then there exist $p, q, r \in \mathbb{Z}^+$  such that $a = pqr$ and $(p, q, r)$ is a solution of the equation $X^n+Y^n=Z^n$ where $X, Y, Z$ are pairwise co-prime.

Conversely, if there exist such positive integers $p, q, r$ which are a solution of $X^n+Y^n=Z^n$, where  $X, Y, Z$ are pairwise co-prime, then there exists a polynomial $p(x)=x^3+bx+a^n$, where $a = pqr$, $b \neq 0$, $\gcd(a,b)=1$ such that $\mathbb{Q}$ is its splitting field.
\end{theorem}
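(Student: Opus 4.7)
The plan is to read information off the roots of $p(x)$ via Vieta's formulas, and to let the coprimality hypothesis $\gcd(a,b)=1$ force multiplicative structure on those roots.

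\textbf{Forward direction.} Assume $\mathbb{Q}$ is a splitting field, so $p(x)=(x-\alpha)(x-\beta)(x-\gamma)$ with $\alpha,\beta,\gamma\in\mathbb{Q}$; since $p(x)$ is monic with integer coefficients, the rational root theorem places each root in $\mathbb{Z}$, and none is zero because $a>0$. Vieta's relations give
\[
\alpha+\beta+\gamma=0,\qquad \alpha\beta+\alpha\gamma+\beta\gamma=b,\qquad \alpha\beta\gamma=-a^{n}.
\]
I would first show pairwise coprimality of $\alpha,\beta,\gamma$: if a prime $\ell$ divided two of them, by the sum relation it would divide the third, whence $\ell\mid a$ (from the product) and $\ell\mid b$ (from the symmetric sum), contradicting $\gcd(a,b)=1$. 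Then $|\alpha|,|\beta|,|\gamma|$ are pairwise coprime positive integers with product $a^{n}$, so by unique factorization each is itself an $n$-th power. Since $\alpha\beta\gamma<0$ and the roots sum to zero, exactly one root is negative; writing the positive ones as $p^{n},q^{n}$ and the negative one as $-r^{n}$, the sum relation becomes $p^{n}+q^{n}=r^{n}$, the product gives $a=pqr$, and pairwise coprimality of $p,q,r$ is inherited from that of the roots.

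\textbf{Converse direction.} Given pairwise coprime positive integers $p,q,r$ with $p^{n}+q^{n}=r^{n}$, set $a=pqr$ and form the polynomial $(x-p^{n})(x-q^{n})(x+r^{n})$. Expansion, together with the identity $p^{n}+q^{n}=r^{n}$, shows that the $x^{2}$ coefficient vanishes, the constant term equals $+a^{n}$, and the linear coefficient is $b=p^{n}q^{n}-r^{2n}$. The condition $\gcd(a,b)=1$ follows from the fact that any prime dividing $a=pqr$ divides exactly one of $p,q,r$ and hence divides exactly two of the three summands $p^{n}q^{n},\,-p^{n}r^{n},\,-q^{n}r^{n}$ of $b$, so cannot divide $b$. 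For $b\ne 0$: the case $n=1$ yields $b=-(p^{2}+pq+q^{2})<0$; the case $n=2$ would force $pq=r^{2}$, which combined with $p^{2}+q^{2}=r^{2}$ gives $p^{2}-pq+q^{2}=0$, impossible in positive integers; and for $n\ge 3$ the hypothesis is vacuous by Fermat's Last Theorem, so nothing remains to check.

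\textbf{Main obstacle.} The only genuinely arithmetic step is the passage in the forward direction from ``pairwise coprime factors with $n$-th power product'' to ``each factor is an $n$-th power,'' together with the sign bookkeeping that isolates the unique negative root; after that, everything reduces to clean applications of Vieta's formulas and elementary divisibility.
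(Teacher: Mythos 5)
Your proof is correct and follows essentially the same route as the paper's: factor $p(x)$ into integer linear factors, use the vanishing $x^2$ coefficient and the coprimality of $a$ and $b$ to force the roots to be pairwise coprime $n$-th powers summing appropriately, and in the converse direction construct $(x-p^n)(x-q^n)(x+r^n)$ explicitly. The paper's own proof merely asserts these steps; your write-up supplies the details it omits (integrality of the roots, the sign analysis, pairwise coprimality of the roots, and the verification that $b\neq 0$ and $\gcd(a,b)=1$ in the converse), but the underlying argument is the same.
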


\begin{proof}
Let the conditions of theorem \ref{thm00} be satisfied and a polynomial $p(x)$ is a product of the linear factors over $\mathbb{Q}$ then $p(x) = (x - \alpha)( x - \beta)( x + \gamma)$, where $a^n= \alpha\beta\gamma$ and $\alpha, \beta, \gamma$ are pairwise relatively prime positive integers. Hence $\alpha=p^n, \beta=q^n, \gamma=r^n$ and $p^n+q^n=r^n$.

Let positive integers $p, q, r$ be a solution of the equation $X^n+Y^n=Z^n$, where $X, Y, Z$ are co-prime in pairs then we can construct the polynomial $p(x)=(x -p^n)(x -q^n)(x+r^n)$.
\end{proof}
An interesting corollary which follows from this theorem is:

\begin{corollary}
Any cubic polynomial of the form $p(x)=x^3+bx+a^n$, where $a, b$ $(ab \neq 0)$ are co-prime and $n \geq 3$  over the field $\mathbb{Q}$ is either irreducible or a product of two irreducible polynomials.
\end{corollary}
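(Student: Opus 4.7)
The plan is to derive the corollary as an immediate consequence of Theorem~\ref{thm00} together with Fermat's Last Theorem. Over $\mathbb{Q}$ a cubic polynomial admits exactly three possible factorization types: it is irreducible; it is a product of a linear polynomial and an irreducible quadratic; or it splits as a product of three linear factors, in which case $\mathbb{Q}$ is its splitting field. The first two types are precisely the conclusion of the corollary, so it suffices to exclude the third.

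First I would dispose of the case $a>0$. Assume, toward a contradiction, that $p(x)=x^{3}+bx+a^{n}$ splits into three linear factors over $\mathbb{Q}$. Then $\mathbb{Q}$ is the splitting field of $p(x)$, and by Theorem~\ref{thm00} there exist positive integers $p,q,r$, pairwise coprime, satisfying $a=pqr$ and $p^{n}+q^{n}=r^{n}$. Since $n\ge 3$, this contradicts Fermat's Last Theorem, so the full split is impossible.

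Next I would reduce the remaining case $a<0$ to the one just handled. If $n$ is even then $a^{n}=|a|^{n}$, so $p(x)=x^{3}+bx+|a|^{n}$ with $|a|>0$ still coprime to $b$, and the preceding case applies directly. If $n$ is odd I would use the substitution $x\mapsto -x$: the polynomial $-p(-x)=x^{3}+bx+|a|^{n}$ shares the factorization pattern of $p(x)$ over $\mathbb{Q}$ but now has positive constant term of the required form, so the previous case again rules out a complete split.

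All the real work is hidden inside Theorem~\ref{thm00} and Fermat's Last Theorem; the only deliberate move is to invoke them at the right moment, so there is no substantive obstacle. The one minor subtlety worth flagging is that Theorem~\ref{thm00} was phrased under $a>0$, while the corollary merely requires $ab\ne 0$, which is why the brief sign reduction above is included to cover negative $a$.
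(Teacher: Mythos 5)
Your proposal is correct and is exactly the argument the paper has in mind: the paper omits the proof, stating only that it is ``a direct application of theorem \ref{thm00} and Fermat's last theorem,'' which is precisely your route of excluding the complete linear split via Theorem~\ref{thm00} plus FLT. Your extra reduction of the case $a<0$ (via $n$ even, or $-p(-x)$ for $n$ odd) is a small but worthwhile addition, since Theorem~\ref{thm00} is stated only for $a>0$ while the corollary assumes only $ab\neq 0$.
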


We shall omit the proof for simplicity's sake as it is a direct application of theorem \ref{thm00} and Fermat's last theorem.

Now let us consider a more general case where we look at the polynomial of arbitrary degree $n$: $p(x) = x^n+ a_{n-2} x^{n-2}+ ... + a_1 x \pm a_0$ where $a_1 a_0 \neq 0$, $\gcd(a_1,a_0)=1$ and $a_0=c^k$ for any $c \in \mathbb{Z}^+$ .

Let $\mathbb{Q}$ be a splitting field for $p(x)$ and $\alpha_1, \alpha_2, ..., \alpha_h \in \mathbb{Z}^+$,  $-\beta_1,-\beta_2,...,-\beta_l \in \mathbb{Z}^-$ are the roots of $p(x)$ and $n = h + l$. Then, $c^k =  \alpha_1...\alpha_h \beta_1...\beta_l$. Moreover, since $\gcd(a_0,a_1)=1$, we can claim that $\alpha_1, \alpha_2,...,\alpha_h, \beta_1, \beta_2,...\beta_l$ are all pairwise co-prime and that  $\alpha_1=x_1^k, \alpha_2=x_2^k, ..., \alpha_h=x_h^k, \beta_1=y_1^k  , \beta_2=y_2^k, ..., \beta_l=y_l^k$ for some $x_i$'s and $y_i$'s.

Therefore we have that:
\begin{equation*}
x_1^k  + x_2^k  + ... +x_h^k  - y_1^k  - y_2^k  - ... - y_l^k  = 0 \text{ where } n = h + l
\end{equation*} 

must be true. However, we can invoke the Euler - Ekl hypothesis (1769, 1998) which states that  ``The equation $x_1^k  + x_2^k  + ... +x_h^k  - y_1^k  - y_2^k  - ... - y_l^k  = 0$, has no solution in positive integers when $k>h+l$'' \cite{Ekl1998}. From the above, this allows us to conclude that there exists no $p(x) = x^n+ a_{n-2} x^{n-2}+ ... + a_1 x \pm a_0$ where $a_1 a_0 \neq 0$, $\gcd(a_1,a_0)=1$, $a_0=c^k$ and $k > n$ for which $\mathbb{Q}$ is a splitting field.

A ``weak conjecture'' we can make is: ``The equation  $x_2^k  + ... +x_h^k  - y_1^k  - y_2^k  - ... - y_l^k  = 0$, where $x_1,..., x_h, y_1,..., y_l$ are co-prime in pairs, has no solution in positive integers when $k>h+l$'', for which can formulate an interesting theorem:

\begin{theorem}
The ``weak conjecture'' is false if and only if there exists a polynomial $p(x)=x^n+a_{n-2} x^{n-2}+ ... + a_1 x \pm a_0$, where $a_0=c^k, c > 0, a_1 a_0 \neq 0$ and $a_1,a_0$ are relatively prime, such that the field $\mathbb{Q}$ is its splitting field.
\end{theorem}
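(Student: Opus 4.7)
My plan is to prove both directions of the equivalence, essentially making rigorous the bijection sketched in the discussion immediately preceding the theorem. Throughout I will tacitly use the implicit condition $k > n$ (otherwise the statement is vacuous relative to the weak conjecture).

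For the ($\Leftarrow$) direction, suppose such a polynomial $p(x)$ exists. Because $\mathbb{Q}$ is a splitting field and $p$ is monic with integer coefficients, Gauss's lemma gives a factorization in $\mathbb{Z}[x]$:
\[
p(x) = \prod_{i=1}^h (x - \alpha_i) \prod_{j=1}^l (x + \beta_j), \quad \alpha_i, \beta_j \in \mathbb{Z}^+, \ h+l = n.
\]
Reading off the $x^{n-1}$ coefficient yields $\sum_i \alpha_i = \sum_j \beta_j$, and reading off the constant term yields $\prod_i \alpha_i \cdot \prod_j \beta_j = c^k$. The hypothesis $\gcd(a_0,a_1)=1$ forces the roots to be pairwise coprime (any prime dividing two distinct roots would divide both $a_0$ and $a_1$). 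Consequently, in the coprime factorization of $c^k$, each factor is itself a $k$-th power: $\alpha_i = x_i^k$, $\beta_j = y_j^k$. Substituting produces $\sum x_i^k = \sum y_j^k$ with pairwise coprime positive integers and $k > h+l$, which contradicts the weak conjecture, so the weak conjecture is false.

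For the ($\Rightarrow$) direction, given pairwise coprime positive integers $x_1,\ldots,x_h,y_1,\ldots,y_l$ with $\sum x_i^k = \sum y_j^k$ and $k > h+l$, I would set $n = h+l$ and construct
\[
p(x) = \prod_{i=1}^h (x - x_i^k) \prod_{j=1}^l (x + y_j^k).
\]
Verification is routine: $p(x) \in \mathbb{Z}[x]$ is monic of degree $n$, splits over $\mathbb{Q}$, has vanishing $x^{n-1}$ coefficient by the defining identity, and has $a_0 = \pm(x_1\cdots x_h\, y_1\cdots y_l)^k = \pm c^k$ with $c = x_1\cdots x_h\, y_1\cdots y_l > 0$. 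The coprimality $\gcd(a_0,a_1)=1$ follows from pairwise coprimality of the roots $r_1,\ldots,r_n$: any prime $\pi \mid a_0$ divides exactly one root $r$, so it divides every summand of $a_1 = \pm\sum_i \prod_{j\ne i} r_j$ except the one that omits $r$, whence $\pi \nmid a_1$.

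The main obstacle I anticipate is establishing $a_1 \ne 0$ in the converse direction. Up to sign $a_1 = \bigl(\prod_i r_i\bigr)\bigl(\sum_i 1/r_i\bigr)$, so $a_1 = 0$ exactly when $\sum_{i=1}^h x_i^{-k} = \sum_{j=1}^l y_j^{-k}$, an additional relation not implied by negation of the weak conjecture. I would handle this either by arguing that such a degenerate coincidence does not occur for a witness satisfying the weak conjecture's negation (a genericity-type argument), or by perturbing the construction, e.g.\ replacing $(x_i,y_j)$ by $(t\,x_i,t\,y_j)$ for a suitable auxiliary integer $t$ coprime to all the $x_i,y_j$, which preserves the defining identity and the coprimality hypothesis while breaking the accidental symmetry in $a_1$.
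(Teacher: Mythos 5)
Your route is the same as the paper's, which simply asserts both implications in two sentences; your forward direction supplies exactly the missing details (integrality of the roots via Gauss's lemma, pairwise coprimality of the roots forced by $\gcd(a_0,a_1)=1$, each root therefore being a $k$-th power, and the vanishing $x^{n-1}$ coefficient yielding the relation) and is correct. The one step you leave unresolved, $a_1\neq 0$ in the converse, is the only genuine issue --- and it is a gap in the paper's own proof as well --- but the repair you sketch does not work: replacing every $x_i,y_j$ by $tx_i,ty_j$ makes all of the new roots divisible by $t$, so it destroys, rather than preserves, the pairwise coprimality hypothesis.

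The good news is that no repair is needed, because the divisibility argument you already use to prove $\gcd(a_0,a_1)=1$ proves $a_1\neq 0$ outright. Let $\pi$ be any prime dividing $c=x_1\cdots x_h\,y_1\cdots y_l$ (such a prime exists unless every $x_i$ and $y_j$ equals $1$). By pairwise coprimality, $\pi$ divides exactly one root $r$ of your constructed polynomial, hence divides every term of $a_1=\pm\sum_i\prod_{j\neq i}r_j$ except the single term omitting $r$; therefore $a_1\not\equiv 0\pmod{\pi}$, and in particular $a_1\neq 0$, so no genericity or perturbation argument is required. The only escape is the degenerate all-ones witness (where $c=1$ and, e.g., $(x-1)^2(x+1)^2$ has vanishing linear term), which must in any case be excluded from the ``weak conjecture'' for it not to be vacuously false; for any nontrivial witness your construction goes through verbatim. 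With that observation inserted, your proof is complete and is a careful version of what the paper merely asserts.
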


\begin{proof}
If $\mathbb{Q}$ is the splitting field for some polynomial $p(x)=x^n+a_{n-2} x^{n-2}+ ... + a_1 x\pm a_0$ then the equation described above has a solution. If the equation has a solution then we can construct  a polynomial $p(x)$ that the field $\mathbb{Q}$ is its splitting field.
\end{proof}

A  proof of the ``weak conjecture'' itself might be possible by induction. In this case, we will define the inductive hypothesis as: ``The field $\mathbb{Q}$ is not a splitting field for any polynomial of the form $p(x)=x^n+a_{n-2} x^{n-2}+ ... + a_1 x\pm a_0$, where $a_0=c^k$, $k>n$, $a_1 a_0 \neq 0$ and $\gcd(a_1,a_0) = 1$, then $\mathbb{Q}$ is not a splitting field for any polynomial form $g(x)=x^n \pm b_{n-1} x^{n-1}+ ... + b_1 x \pm b_0$, where $b_{n-1} b_0= c^k$, $k > n+1$,  $b_{n-1} b_1 b_0 \neq 0$, and $\gcd(b_{n-1} b_1 ,b_0) = 1$''.

A sketch of the proof would then go as follows:

\begin{itemize}

\item[Step 1]: For the case $n = 3$, $p(x) = x^3+a_1 x \pm a_0$, where $a_0=c^k$, $k>3$, $a_1 a_0 \neq 0$ and $\gcd(a_1,a_0) = 1$. We have already shown that $\mathbb{Q}$ is not a splitting field for such polynomials.

\item[Step 2]: For the case $n = 3$, $g(x)=x^3 \pm b_2 x^2+ b_1 x \pm b_0$, where $b_2 b_0= c^k$, $k>4$, $b_2 b_1 b_0 \neq 0$, and $\gcd(b_2 b_1 ,b_0) = 1$, the field $\mathbb{Q}$ is not a splitting field for such polynomials as a consequence of the inductive hypothesis.

\item[Step 3]: For the case $n = 4$, $p(x)=x^4+a_2 x^2+a_1 x \pm a_0$, where $a_0=c^k$, $k>4$, $a_1 a_0 \neq 0$ and $\gcd(a_1,a_0) = 1$, the field $\mathbb{Q}$ is not a splitting field by step two.

\item[Step 4]: For the case $n = 4$, $g(x)=x^4 \pm b_3 x^3+b_2 x^2+ b_1 x \pm b_0$, where $b_3 b_0= c^k$, $k>5$, $b_3 b_1 b_0 \neq 0$, and $\gcd(b_3 b_1 ,b_0) = 1$, the field $\mathbb{Q}$ is not a splitting field as a consequence of the inductive hypothesis. 

\item[Step 5]: For the case $n = 5$, $p(x)=x^5+a_3 x^3+a_2 x^2+a_1 x \pm a_0$, where $a_0=c^k$, $k>5$, $a_1 a_0 \neq 0$ and $\gcd (a_1,a_0) = 1$, the field $\mathbb{Q}$ is not a splitting field by step four.

\end{itemize}

and the inductive sequence continues. 

The existing counter-examples to Euler's conjecture do not disprove our suggested alternative conjecture :``The equation $x_1^k+x_2^k  + ... +x_n^k  - y^k= 0$, where $x_1, x_2,…, x_h, y$ are relatively prime in pairs, has no solution in positive integers when $k > n \geq 2$''.

All the existing counter examples are connected to the polynomials of the form $p_4 (x)=x^4+a_2 x^2+a_1 x - a_0^4$ and $p_5 (x)=x^5+b_3 x^3+b_2 x^2+b_1 x+b_0^5$ where $a_0, a_1$ and $b_0, b_1$ are not co-prime and for which $\mathbb{Q}$ becomes a splitting field. In other words, the known solutions of equations $x_1^4  + x_2^4  + x_3^4  = y^4$,  $x_1^5  + x_2^5  + x_3^5  + x_4^5= y^5$ in positive integers satisfy do not satisfy the condition that $x_1,..., x_h, y$ should be pairwise co-prime (see appendix for the concrete examples which are taken from \cite{Elkies1998}, \cite{LanderParkin1966} and \cite{MalterSchleicherZagier2013}).

\section{Fermat's Theorem \& Number Theory}

The goal of the present section is to study the relationship between the unsolvability of some Diophantine equations and Fermat's last theorem. 

\begin{theorem}\label{thm0}
The equation $x^n + y^n + z^n = u^n$ with $xy = zu$, where $\gcd(x, y)=\gcd(z, u)=1$ has no solution over $\mathbb{Z}^+$ when $n \geq 2$.
\end{theorem}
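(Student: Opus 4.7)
The plan is to convert the constrained system into one or more Fermat-type equations by exploiting the coprimality hypotheses, and then invoke Fermat's Last Theorem. First, from $xy=zu$ together with $\gcd(x,y)=\gcd(z,u)=1$ I would set $a=\gcd(x,z)$, $b=\gcd(y,z)$; since $\gcd(x,y)=1$ the divisor $z$ of $xy$ splits as $z=ab$, and then writing $x=ax'$, $y=by'$ one automatically gets $u=x'y'$. Chasing the two coprimality assumptions through this factorization shows that the four positive integers $a,b,x',y'$ are pairwise coprime.

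Next, substituting $x=ax'$, $y=by'$, $z=ab$, $u=x'y'$ into $x^n+y^n+z^n=u^n$ and rearranging gives the key identity
\[
(x'^n-b^n)(y'^n-a^n)=2a^nb^n.
\]
A short size argument rules out the case where both factors are negative (their absolute values would be strictly less than $b^n$ and $a^n$ respectively, so their product would be less than $a^nb^n<2a^nb^n$), so both factors are positive. Since $\gcd(x',b)=\gcd(y',a)=1$, we have $\gcd(x'^n-b^n,b)=\gcd(y'^n-a^n,a)=1$, which forces the prime powers of $a$ and of $b$ to concentrate in one factor each. Combining with the factor of $2$, the only possibilities are $(x'^n-b^n,y'^n-a^n)=(a^n,2b^n)$ or $(2a^n,b^n)$.

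In the first case one reads off $x'^n=a^n+b^n$, and in the second $y'^n=a^n+b^n$; in either case Fermat's Last Theorem, applied to the coprime positive integers on the right, closes the argument for $n\geq 3$. The main obstacle is the remaining case $n=2$, where FLT is vacuous. Here I would exploit the \emph{second} relation in each case, which together with the first produces three squares in arithmetic progression with common difference $b^2$ (respectively $a^2$): for instance, in case (i) the equations $x'^2=a^2+b^2$ and $y'^2=a^2+2b^2=x'^2+b^2$ give $x'^4-b^4=a^2y'^2=(ay')^2$. This is precisely the equation $X^4-Y^4=Z^2$ that Fermat ruled out by infinite descent, delivering the contradiction and completing the proof for $n=2$.
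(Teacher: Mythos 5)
Your proof is correct, and it reaches the conclusion by a genuinely different route than the paper. The paper proceeds through two lemmas: first a parity lemma arguing via residues modulo $4$ that $xy$ must be even, and then a four-way gcd splitting of the relation $xy = zu$ into the four factors $(x,u)$, $(x,z)$, $(y,u)$, $(y,z)$, from which adding and subtracting two rearrangements of $x^n + y^n = u^n - z^n$ yields $\bigl((y,u)(x,z)\bigr)^n = (x,u)^{2n} - (y,z)^{2n}$, i.e.\ an equation of the shape $A^n + (C^2)^n = (B^2)^n$, contradicted by Fermat's Last Theorem for $n \ge 3$ and by the classical descent on $Z^2 = X^4 - Y^4$ for $n = 2$. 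You instead split only $z$ and $u$, writing $z = ab$ with $a = \gcd(x,z)$, $b = \gcd(y,z)$ and $u = x'y'$, and extract the identity $(x'^n - b^n)(y'^n - a^n) = 2a^nb^n$, which I have checked; your size argument correctly forces both factors positive, and the coprimality relations $\gcd(x'^n - b^n, b) = \gcd(y'^n - a^n, a) = 1$ (which follow from $\gcd(z,u)=1$) correctly pin the two cases $x'^n = a^n + b^n$ or $y'^n = a^n + b^n$. What your route buys is a reduction to the \emph{bare} Fermat equation for $n \ge 3$ and the complete elimination of the paper's parity lemma, whose mod-$4$ case analysis is the most delicate part of the paper's argument -- in your version the factor $2$ is simply absorbed into the case split $(1,2)$ versus $(2,1)$. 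The price is that $n = 2$ needs the separate arithmetic-progression-of-squares step using the second relation in each case, e.g.\ $x'^4 - b^4 = (ay')^2$; but this lands on exactly the same quartic descent $X^4 - Y^4 = Z^2$ that the paper's unified final equation also relies on at $n = 2$, so nothing extra is assumed. Your argument also sidesteps the paper's unproved coprimality claim $\gcd\bigl((x,u)^n + (y,z)^n,\,(x,u)^n - (y,z)^n\bigr) = 1$, which itself silently depends on the parity lemma; on balance your proof is the more self-contained of the two.
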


\begin{lemma}\label{lem0}
Let us consider the following system of equations over $\mathbb{Z}^+$
\begin{equation}
\begin{cases}\label{sys0}
X^n + Y^n = {X'}^n - {Y'}^n \\
XY = X'Y',
\end{cases}
\end{equation}
where $\gcd(X, Y)= \gcd(X', Y')=1$. If the system (\ref{sys0}) is solvable, then $XY \equiv 0 \mod 2$.
\end{lemma}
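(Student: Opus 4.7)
The plan is to argue by contradiction: assume $XY$ is odd, and show this forces a solution of the Fermat equation $\alpha^n+\beta^n=\gamma^n$ in positive integers, which is impossible either by Fermat's last theorem (for $n\ge 3$) or by an elementary mod $4$ argument (for $n=2$).

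First, since $\gcd(X,Y)=1$, the assumption that $XY$ is odd forces both $X$ and $Y$ odd; then $X'Y'=XY$ is odd as well, so $X'$ and $Y'$ are both odd. Next I would parametrize the four variables using the multiplicative condition $XY=X'Y'$. Setting $d=\gcd(X,X')$ and writing $X=da$, $X'=db$ with $\gcd(a,b)=1$, the equation $XY=X'Y'$ simplifies to $aY=bY'$, whence $b\mid Y$ and $a\mid Y'$; this gives $Y=bc$ and $Y'=ac$ for one and the same positive integer $c$. The coprimality of $(X,Y)$ and of $(X',Y')$ then forces $a,b,c,d$ to be pairwise coprime positive integers, and since $XY$ is odd all four are odd.

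Substituting this parametrization into $X^n+Y^n={X'}^n-{Y'}^n$ and rearranging gives
\[
d^n(b^n-a^n)=c^n(a^n+b^n).
\]
One may assume $b>a$ without loss of generality, since positivity of the right-hand side rules out $b\le a$. Now $\gcd(c,d)=1$, and because $\gcd(a,b)=1$ with $a,b$ both odd one has $\gcd(a^n+b^n,b^n-a^n)=2$. A short divisibility analysis (dividing out the common factor of $2$ and matching prime power divisors via the coprimality of $c$ and $d$) forces
\[
a^n+b^n=2d^n,\qquad b^n-a^n=2c^n.
\]
Adding and subtracting these identities yields the two Fermat-type equations
\[
a^n+c^n=d^n,\qquad c^n+d^n=b^n,
\]
in pairwise coprime positive integers.

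Either equation is enough to finish: for $n\ge 3$, Fermat's last theorem applied to $a^n+c^n=d^n$ is an immediate contradiction, while for $n=2$ the same relation with $a$ and $c$ both odd gives $d^2\equiv 2\pmod 4$, impossible since squares are $0$ or $1$ mod $4$. The main technical point, and the only place real care is needed, is the middle step: verifying that the coprimality conditions really do collapse the equation $d^nv=c^nu$ (with $u=(a^n+b^n)/2$, $v=(b^n-a^n)/2$, $\gcd(u,v)=1$) to $u=d^n$, $v=c^n$ with no leftover factor. Once that is done cleanly the reduction to FLT is automatic and the lemma follows.
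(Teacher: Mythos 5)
Your argument is correct, but it takes a genuinely different route from the paper. The paper proves Lemma \ref{lem0} by a direct congruence computation: assuming $XY$ odd, it regards $X,Y,X',Y'$ as products of primes of the forms $4k+1$ and $4k'-1$ and checks, case by case on the parity of the number of primes of the second form in each factor, that $X^n+Y^n\not\equiv {X'}^n-{Y'}^n \pmod 4$; no appeal to Fermat is made at this stage. You instead import the gcd-factorization of the condition $XY=X'Y'$ (writing $X=da$, $X'=db$, $Y=bc$, $Y'=ac$ with $a,b,c,d$ pairwise coprime), which is precisely the device the paper reserves for Lemma \ref{lem1}, and push it through in the odd case: there $\gcd(a^n+b^n,\,b^n-a^n)=2$, and after cancelling the $2$ your divisibility analysis correctly forces $a^n+b^n=2d^n$ and $b^n-a^n=2c^n$, hence $a^n+c^n=d^n$ and $c^n+d^n=b^n$ in pairwise coprime positive integers. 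Every step of this checks out. Two remarks. First, your endgame is heavier than necessary: since $a$, $c$ and $d$ are all odd, the relation $a^n+c^n=d^n$ already equates an even number with an odd one, so you reach a contradiction for every $n\ge 1$ without invoking Fermat's Last Theorem or the mod-$4$ analysis of squares; this also removes the restriction of your argument to $n\ge 2$ (the lemma as stated carries no hypothesis on $n$, though it is only ever applied with $n\ge 2$). Second, as to what each approach buys: the paper's congruence argument is elementary and self-contained, whereas yours unifies the two lemmas under a single factorization identity, is in the spirit of the paper's stated aim of deploying Fermat's theorem, and --- with the parity shortcut --- is arguably cleaner than the paper's somewhat sketchy enumeration of prime-class cases.
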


\begin{proof}
Let:
\begin{equation*}
\begin{cases}
X = a, Y = b, X' = a', Y' = b' \\
\gcd(a, b) = \gcd(a', b') = 1
\end{cases}
\end{equation*}
 be a solution of (\ref{sys0}) and $ab \equiv 1 \mod 2$. Then $a$ and $b$ are products of primes of the form $4k + 1$ and $4k' - 1$. 
 \begin{itemize}
 	\item  Let $ab$ be a product of primes only of the form $4k + 1$. Then $a^n + b^n \not \equiv {a'}^n - {b'}^n \mod 4$ and $a^n + b^n = {a'}^n - {b'}^n$ is false. 
	
	\item Let $ab$ be a product of primes only of the form $4k' - 1$ and the total number of such primes is even. Let $a$ contain an even number of primes and $a'$ contain an odd number of primes; then $b$ contains an even number of primes and $b'$ contains an odd number of primes. This would imply that $a^n + b^n \not \equiv {a'}^n - {b'}^n \mod 4$ and the equality $a^n + b^n = {a'}^n - {b'}^n$ would be false. The remaining cases are excluded analogously.
	
	\item Let $ab$ be a product of primes only of the form $4k' - 1$ and the total number of such primes is odd. Let $a$ contain an even number of primes and $a'$ contain an odd number of primes; then $b$ contains an odd number of primes and $b'$ contains an even number of primes. This would imply that $a^n + b^n \not \equiv {a'}^n - {b'}^n \mod 4$ and the equality $a^n + b^n = {a'}^n - {b'}^n$ would be false. The remaining cases are excluded analogously.
 \end{itemize}
\end{proof}

\begin{lemma}\label{lem1}
The following system of equations over $\mathbb{Z}^+$
\begin{equation}\label{sys1}
\begin{cases}
X^n + Y^n = {X'}^n - {Y'}^n \\
XY = X'Y'
\end{cases}
\end{equation}
where $\gcd(X, Y)=\gcd(X', Y')=1$ and $XY \neq 0$, has no solutions when $n \geq 2$.
\end{lemma}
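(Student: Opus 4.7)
My plan is to parametrize the system via the standard coprime-factorization lemma for $XY = X'Y'$, substitute into the first equation, and reduce the resulting Diophantine identity to two Fermat-type equations that fail either by Wiles's theorem (for $n \geq 3$) or by Fermat's classical descent theorem on $x^4 - y^4 = z^2$ (for $n = 2$).

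First I would record the factorization: since $XY = X'Y'$ with $\gcd(X,Y) = \gcd(X',Y') = 1$, setting $a = \gcd(X, X')$, $u = X/a$, $v = X'/a$, and $w = Y'/u$ produces pairwise coprime positive integers $a, u, v, w$ with $X = au$, $Y = vw$, $X' = av$, $Y' = uw$. Substituting these into $X^n + Y^n = {X'}^n - {Y'}^n$ and rearranging yields the key identity
\begin{equation*}
a^n(v^n - u^n) \;=\; w^n(u^n + v^n). \tag{$\star$}
\end{equation*}
Since $\gcd(a, w) = 1$, relation $(\star)$ forces $a^n \mid u^n + v^n$ and $w^n \mid v^n - u^n$, and comparing quotients gives $u^n + v^n = t\,a^n$ and $v^n - u^n = t\,w^n$ for a common positive integer $t$. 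Because $\gcd(u,v) = 1$ makes $\gcd(u^n + v^n, v^n - u^n)$ divide $2$, one has $t \in \{1, 2\}$. (Positivity of $X, Y, X', Y'$ keeps $a, u, v, w$ positive and forces $v > u$.)

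It then remains to dispose of each value of $t$. If $t = 1$, the identities read $u^n + v^n = a^n$ and $u^n + w^n = v^n$, each a direct contradiction to Fermat's Last Theorem for $n \geq 3$; for $n = 2$ their product gives $v^4 - u^4 = (aw)^2$, barred by Fermat's classical theorem that $x^4 - y^4 = z^2$ has no nontrivial positive solutions. If $t = 2$, adding and subtracting the two identities rearranges them to $a^n = u^n + w^n$ and $v^n = a^n + w^n$, again FLT-violations for $n \geq 3$; for $n = 2$, multiplying these produces $a^4 - w^4 = (uv)^2$, once more ruled out by Fermat's descent. All four branches yield a contradiction.

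The step I expect to require the most care is the bookkeeping of positivity and pairwise coprimality for $a, u, v, w$, since both Wiles's theorem and the $x^4 - y^4 = z^2$ theorem need genuinely positive (and preferably primitive) solutions in order to apply. Lemma \ref{lem0} is not logically required by this route, but it offers a useful consistency check: in each subcase exactly one of $a, u, v, w$ is even, matching the parity structure of the primitive Fermat/Pythagorean triples that the reduced equations would otherwise furnish.
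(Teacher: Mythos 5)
Your proposal is correct, and it starts from the same place as the paper's own proof: the coprime four-factor decomposition of $XY = X'Y'$ (your $a, u, v, w$ are exactly the paper's $(a,a')$, $(a,b')$, $(b,a')$, $(b,b')$, and your identity $(\star)$ is just a different grouping of the same expanded equation). The genuine divergence is in how that identity gets split into two Fermat-type equations. The paper first proves Lemma~\ref{lem0} to force $XY$ even, restricts to the case $a \equiv a' \pmod 2$ (without explaining why that is harmless), and uses the resulting parity to assert $\gcd\bigl((a,a')^n+(b',b)^n,\,(a',a)^n-(b,b')^n\bigr)=1$, so that the split is exact and multiplying the two pieces yields $((a',b)(a,b'))^n=(a',a)^{2n}-(b,b')^{2n}$. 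You instead observe only that the relevant gcd divides $2$, carry the common factor $t\in\{1,2\}$, and dispose of both values; this makes Lemma~\ref{lem0} logically unnecessary and automatically covers the parity configurations that the paper's normalization leaves implicit (they can be recovered by swapping $X$ and $Y$, though the paper never says so --- your $t=2$ branch is precisely what absorbs them). You are also more careful at the finish: the paper dismisses its final equation for all $n\ge 2$ with a single appeal to ``Fermat's theorem,'' whereas you correctly separate $n\ge 3$ (Wiles/FLT) from $n=2$, where the required input is Fermat's classical descent theorem that $x^4-y^4=z^2$ has no positive solutions. In short, the two proofs share the decomposition, but yours is self-contained and is tighter at exactly the two points where the paper's argument is thinnest.
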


\begin{proof}
Let $a, b, a', b'$ be a solution of the system (\ref{sys1}) where $\gcd(a, b) = \gcd(a', b') = 1$. For the following equations assume the notation $\gcd(a,b) = (a,b)$. According to lemma \ref{lem0}, $ab = a'b' \equiv 0 \mod 2$. Let $a \equiv a' \mod 2$ then we can write the equalities:
\begin{equation*}
a = (a, a')(a, b'),  b = (b, a')(b, b'); a'= (a', a)(a', b),  b' = (b', a)(b', b)
\end{equation*}
and
\begin{eqnarray*}
(a, a')^n (a, b')^n + (b, a')^n (b, b')^n = (a', a)^n (a', b)^n - (b', a)^n (b', b)^n \\
(a, a')^n(a, b')^n + (b', a)^n (b', b)^n = (a', a)^n (a', b)^n - (b, a')^n(b, b')^n
\end{eqnarray*}
or
\begin{equation*}
(a, b')^n ((a, a')^n + (b', b)^n) = (a', b)^n ((a', a)^n - (b, b')^n)
\end{equation*}
\newline	
since $\gcd ((a, b'), (a', b)) = \gcd((a, a')^n + (b', b)^n, (a', a)^n - (b, b')^n) = 1$
Thus we obtained the following equalities
\begin{equation*}	
(a, b')^n = (a', a)^n - (b, b')^n, (a', b)^n = (a, a')^n + (b', b)^n
\end{equation*}
or 				
\begin{equation*}
((a', b)(a, b'))^n = (a', a)^{2n} - (b, b')^{2n}
\end{equation*}
This equality is false for any $n \geq 2$ due to Fermat's theorem. Thus the lemma is true.
\end{proof}

\begin{proof}[Proof of Theorem \ref{thm0}] Theorem \ref{thm0} is true due to lemmas \ref{lem0}, \ref{lem1}. \end{proof} 

\begin{corollary}
Let $a, b, c, n \in \mathbb{Z}^+$ and $ab = c, \gcd(a, b) = 1$ if $ab \equiv 1 \mod 2$ then the polynomials $p(x) = x^2 + (a^n + b^n) x - c^n$ are irreducible over $\mathbb{\mathbb{Q}}$ for  $n \geq 1$; If $ab \equiv 0 \mod 2$ then the polynomials $p(x) = x^2 + (a^n + b^n) x - c^n$ are irreducible over $\mathbb{\mathbb{Q}}$ for $n \geq 2$.
\end{corollary}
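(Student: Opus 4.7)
The plan is to reduce any hypothetical factorization of $p(x)$ to a solution of the system appearing in Lemmas \ref{lem0} and \ref{lem1}, and then invoke the appropriate lemma in each case. Suppose for contradiction that $p(x) = x^2 + (a^n+b^n)x - c^n$ is reducible over $\mathbb{Q}$. Since $p(x)$ is monic with integer coefficients, the rational root theorem forces each root to be an integer, and the negative constant term $-c^n$ makes the two roots of opposite sign. Writing $p(x) = (x+u)(x-v)$ with $u,v \in \mathbb{Z}^+$, Vieta's formulas give
$$u - v = a^n + b^n, \qquad uv = c^n = a^n b^n.$$

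Next I would establish that $\gcd(u,v) = 1$. From $\gcd(a,b)=1$ we obtain $\gcd(a^n+b^n,\,a^n) = \gcd(b^n,a^n) = 1$ and similarly $\gcd(a^n+b^n,\,b^n) = 1$, hence $\gcd(a^n+b^n,\,a^n b^n) = 1$. Any common divisor of $u$ and $v$ divides both $u - v$ and $uv$, so it must be $1$. Then, combining $\gcd(u,v)=1$ with $uv = a^n b^n$ and unique factorization, each prime power appearing in $a^n b^n$ must lie entirely in $u$ or in $v$, which forces both $u$ and $v$ to be perfect $n$-th powers. Write $u = U^n$, $v = V^n$ with $UV = ab$ and $\gcd(U,V) = 1$; setting $a_1 = \gcd(U,a)$, $b_1 = \gcd(U,b)$, $a_2 = a/a_1$, $b_2 = b/b_1$ yields $U = a_1 b_1$, $V = a_2 b_2$ with $\gcd(a_1,a_2) = \gcd(b_1,b_2) = 1$.

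Substituting into $u - v = a^n + b^n$ yields
$$a^n + b^n = (a_1 b_1)^n - (a_2 b_2)^n, \qquad ab = (a_1 b_1)(a_2 b_2).$$
Hence $(X,Y,X',Y') = (a,\,b,\,a_1 b_1,\,a_2 b_2)$ solves system (\ref{sys0}) with $\gcd(X,Y) = \gcd(a,b) = 1$ and $\gcd(X',Y') = \gcd(a_1 b_1,\,a_2 b_2) = 1$ (the latter because $\gcd(a_i,b_j) = 1$ for all $i,j$, since $\gcd(a,b) = 1$). For $n \geq 2$ the existence of such a solution is ruled out by Lemma \ref{lem1}, so $p(x)$ is irreducible whenever $n \geq 2$, regardless of the parity of $ab$. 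For the remaining case $n = 1$ with $ab$ odd, Lemma \ref{lem0} asserts that any solution of (\ref{sys0}) must satisfy $XY \equiv 0 \pmod{2}$, contradicting $XY = ab \equiv 1 \pmod{2}$. In both regimes the assumed factorization is impossible, and $p(x)$ is irreducible.

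The main obstacle I expect is the prime-partition step: one must verify carefully that $\gcd(u,v) = 1$ together with $uv = (ab)^n$ and $\gcd(a,b) = 1$ forces the multiplicative structure $u = (a_1 b_1)^n$, $v = (a_2 b_2)^n$ with $a = a_1 a_2$, $b = b_1 b_2$. Once that structural reduction is in place, the rest is essentially bookkeeping: one recognizes the resulting identity as an instance of system (\ref{sys0}) and invokes Lemma \ref{lem1} for $n \geq 2$ or Lemma \ref{lem0} for the $n = 1$ odd case.
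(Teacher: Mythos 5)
Your argument is correct and follows exactly the route the paper intends: the paper states this corollary without proof, positioning it as a consequence of Lemmas \ref{lem0} and \ref{lem1}, and your reduction via the rational root theorem, Vieta's formulas, and the coprime factorization $uv=(ab)^n \Rightarrow u=U^n,\ v=V^n$ produces precisely a solution of system (\ref{sys0}), which Lemma \ref{lem1} excludes for $n\geq 2$ and Lemma \ref{lem0} excludes for $n=1$ with $ab$ odd. The only remark is that your extra decomposition $U=a_1b_1$, $V=a_2b_2$ is not actually needed for the reduction (taking $X'=U$, $Y'=V$ suffices), but it is harmless and correct.
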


\begin{theorem}
The system of the equations over $\mathbb{Z}$:
\begin{equation}\label{sys2}
\begin{cases}
x_1^3  +  x_2^3 + x_3^3 + 3x_4^n = 0 \\
(x_1 + x_2 + x_3) x_4= 0,
\end{cases}
\end{equation}
where $x_1, x_2, x_3$ are co-prime, $x_1x_2x_3\neq 0$, has no solutions in $\mathbb{Z}$ when $n > 2$.
\end{theorem}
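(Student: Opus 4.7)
The plan is to split on the second equation $(x_1+x_2+x_3)x_4 = 0$, which forces either $x_4=0$ or $x_1+x_2+x_3=0$, and handle each case by a direct reduction to Fermat's Last Theorem.

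\textbf{Case 1 ($x_4 = 0$).} The first equation collapses to $x_1^3 + x_2^3 + x_3^3 = 0$. Since the $x_i$ are nonzero and coprime, they are in fact pairwise coprime (any prime dividing two of them would divide the third by the equation, contradicting joint coprimality). After choosing signs so that one of them, say $x_3$, is negative, we get $x_1^3 + x_2^3 = (-x_3)^3$ with $x_1, x_2, -x_3$ positive pairwise-coprime integers, directly contradicting Fermat's Last Theorem for exponent $3$.

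\textbf{Case 2 ($x_1+x_2+x_3 = 0$).} Here I would use the well-known identity that whenever $x_1+x_2+x_3 = 0$, one has $x_1^3+x_2^3+x_3^3 = 3x_1 x_2 x_3$. Substituting this into the first equation gives $3x_1 x_2 x_3 + 3 x_4^n = 0$, i.e.\ $x_1 x_2 x_3 = -x_4^n$. From $x_1+x_2+x_3=0$ together with the coprimality hypothesis, pairwise coprimality of $x_1,x_2,x_3$ again follows. Taking absolute values, and relabeling so that $|x_3|$ is the one whose sign is opposite to the other two, I get $|x_3| = |x_1|+|x_2|$ with $|x_1|,|x_2|,|x_3|$ pairwise coprime positive integers and $|x_1|\,|x_2|\,|x_3| = |x_4|^n$.

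The finishing step is the standard coprimality argument: since a product of pairwise coprime positive integers is an $n$-th power exactly when each factor is, I can write $|x_1| = a^n$, $|x_2| = b^n$, $|x_3| = c^n$ for positive integers $a,b,c$. The relation $|x_1|+|x_2|=|x_3|$ then becomes $a^n + b^n = c^n$, contradicting Fermat's Last Theorem for $n > 2$. I do not expect a genuine obstacle in either branch; the only mildly delicate point is the bookkeeping of signs in Case 2 (which sign is isolated, and whether $x_4$ may be negative when $n$ is odd), but passing to absolute values sidesteps this cleanly. The whole argument is really just two invocations of Fermat's Last Theorem — one at exponent $3$, one at exponent $n$ — glued by the identity $x_1^3+x_2^3+x_3^3 = 3x_1x_2x_3$ valid under the side condition $x_1+x_2+x_3=0$.
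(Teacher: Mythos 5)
Your proof is correct and follows essentially the same route as the paper: the paper's one-line proof simply asserts the reduction to $x_1x_2(x_1+x_2)=x_4^n$ being equivalent to Fermat's Last Theorem, which is exactly your Case 2 (via the identity $x_1^3+x_2^3+x_3^3=3x_1x_2x_3$ when $x_1+x_2+x_3=0$ and the pairwise-coprime $n$-th power argument). You additionally make explicit the $x_4=0$ branch and the coprimality bookkeeping, which the paper glosses over.
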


\begin{proof}
System (\ref{sys2}) can be reduced to: ``the equation $x_1x_2(x_1 + x_2) = x_4^n$, where $x_1, x_2$ are co-prime, has no solution in integers when $n > 2$'' which is equivalent to Fermat's last theorem.
\end{proof}

The equation $x_1x_2(x_1 + x_2) = x_3$ in rational integers, where $x_1, x_2$ are co-prime and $x_1x_2x_3\neq 0$, can be used for formulating various theorems and hypotheses, for example:
\begin{itemize}
	\item The equations $x_1x_2(x_1 + x_2) = x_3^4$ and $x_1x_2(x_1^2 + x_2^2) = x_3^2$, where $x_1, x_2$ are co-prime, have no solutions over $\mathbb{Z}$ and $\mathbb{Z}[i]$.
	
	\item Fermat's last theorem: the equation $x_1x_2(x_1+ x_2) = x_3^n$, where $x_1, x_2$ are relatively prime, has no solution over $\mathbb{Z}^+$ when $n > 2$. This equation was considered in the paper \cite{TCai}
	
	\item Euler's conjecture: the equation $x_1x_2x_3(x_1 + x_2 + x_3) = x_4^n$, where  $x_1, x_2,  x_3 , x_1+ x_2 + x_3$ are relatively prime in pairs, has no solution over $\mathbb{Z}^+$ when $n > 3$. 
\end{itemize}

Thus, Euler's conjecture: ``The equation  $x^n  + y^n  + z^n  = u^n$, where $x, y, z, u$ are pairwise relatively prime, has no solution in positive integers when $n > 3$'' is a natural sequential extension of Fermat's Last Theorem.

\section{Conclusion}

The main result of this paper are as follows:

\begin{itemize}
  \item ``The equation $x_1^k+x_2^k  + ...+x_n^k  - y^k= 0$, where $x_1,..., x_n, y$ are pairwise relatively prime, has no solutions in positive integers when $k>n \geq 2'$'.

 \item ``The equation $x_1^k+x_2^k+...+x_h^k-y_1^k-y_2^k-...-y_l^k=0$, where $x_1,..., x_h, y_1,..., y_l$   are pairwise relatively prime and $h  \geq  l$, has no solution in positive integers when $k>h+l \geq 3$''.

 \item  ``The equation $x^n+y^n+z^n=u^n$ with $xy = zu$, where $x, y$ and also $z, u$ are relatively prime, respectively, has no solution in natural numbers when $n  \geq  2$''.

\item ``The equation $x^n + y^n + z^n = u^n$, where $x,y,z,u$ are pairwise relatively prime, has no solution over $\mathbb{Z}^+$ when $n \geq 3$''.

\end{itemize}

\begin{appendices}
Counter-examples to Euler's conjecture (1769) 
``The equation $x_1^k+x_2^k+…+x_n^k=y^k$ has no solution in positive integers when $k > n \geq 2$''.

($n = 3, k = 4$)
\begin{eqnarray*}
2682440^4       +5365639^4     +18796760^4   =20615673^4,    \text{ Elkies (1988)}\\
95800^4           +217519^4        +414560^4        =422481^4,      \text{ R. Frye (1988)}\\ 
630662624^4+275156240^4+219076465^4=638523249^4,  \text{ MacLeod   (1997)}\\
1705575^4      +5507880^4      +8332208^4      =8707481^4,   \text{ Bernstein  (2001)} 
\end{eqnarray*}

($n = 4, k = 5$)
\begin{eqnarray*}
27^5+84^5      +10^5         +133^5     =144^5, \text{ Lander, Parkin  (1966)} \\
55^5+3183^5+28969^5+85282^5=85359^5,  \text{ J. Frye (2004)}
\end{eqnarray*}

\end{appendices}
\bibliography{references}
\bibliographystyle{plainnat}

\end{document}